      \theoremstyle{plain}
      \newtheorem{theorem}{Theorem}[section]
      \newtheorem{lemma}{Lemma}[section]
      \theoremstyle{definition}
      \newtheorem{definition}{Definition}[section]
      \theoremstyle{remark}
      \newtheorem{remark}{Remark}[section]
\def\st{\Asterisk}
\begin{document}

\title{Checkerboard embeddings of $\st$-graphs into nonorientable surfaces}
\author{Tyler Friesen
\footnote{Ohio State University, 154 W 12th Ave., Columbus, Ohio 43210}
\footnote{\tt {friesen.15@osu.edu}}
\and Vassily Olegovich Manturov
\footnote{Peoples' Friendship University of Russia, Moscow 117198,
Ordjonikidze St., 3} \footnote{\tt {vomanturov@yandex.ru}}
\footnote{Partially supported by grants of the Russian Government
11.G34.31.0053, RF President NSh � 1410.2012.1, Ministry of
Education and Science of the Russian Federation 14.740.11.0794.}}

\maketitle

\begin{abstract}
This paper considers $\st$-graphs in which all vertices have degree 4 or 6, and studies the question of calculating the genus of nonorientable surfaces into which such graphs may be embedded. In a previous paper \cite{firstpaper} by the authors, the problem of calculating whether a given $\st$-graph in which all vertices have degree 4 or 6 admits a $\mathbb Z_2$-homologically trivial embedding into a given orientable surface was shown to be equivalent to a problem on matrices. Here we extend those results to nonorientable surfaces. The embeddability condition that we obtain yields quadratic-time algorithms to determine whether a $\st$-graph with all vertices of degree 4 or 6 admits a $\mathbb Z_2$-homologically trivial embedding into the projective plane or into the Klein bottle.
\end{abstract}

{\bf Keywords:} Graph, $\st$-graph, surface, embedding, genus

{\bf AMS Subject Classification:} Primary 05C10; Secondary 57C15, 57C27

\section{Introduction}

\begin{definition}
A \emph{$\st$-graph} is a graph which at each vertex has a bijection from the outgoing half-edges to the vertices of a cycle graph. Half-edges which are mapped to adjacent vertices are (formally) \emph{adjacent}. Half-edges are said to be \emph{opposite} if they are mapped to vertices of maximal distance in the cycle graph.
\end{definition}

\begin{remark}
By an embedding of a $\st$-graph $\Gamma$ into a surface $S$ we always mean an embedding of  $\Gamma$ into $S$ such that the formal relation of being adjacent coincides with the relation of being adjacent induced by the embedding.
\end{remark}

\begin{definition}
An \emph{angle} in a $\st$-graph is a pair of adjacent half-edges at a vertex.
\end{definition}

\begin{definition}
A \emph{checkerboard embedding} of a  $\st$-graph $\Gamma$ into $S$ is an embedding such that the cells of $S \setminus \Gamma$ admit a 2-coloring under which any cells with a common edge have different colors. 
\end{definition}

\begin{remark}
Checkerboard embeddings are exactly those embeddings whose first $\mathbb Z_2$-homology class is zero.
\end{remark}

In \cite{manturov1} the second named author (V.O.M.) gave a solution to the question of whether four-valent framed graphs are planar. In \cite{manturov2}, he addressed the question of determining the genus of surfaces into which four-valent framed graphs can be embedded, in particular considering the special case of surfaces into which four-valent framed graphs may be checkerboard-embedded. In \cite{friesen}, the first named author (T.F.) introduced $\st$-graphs as a generalization of four-valent framed graphs, and gave a planarity condition for $\st$-graphs with each vertex of degree 4 or 6. In \cite{firstpaper}, the authors characterized the genera of orientable surfaces into which $\st$-graphs with each vertex of degree 4 or 6 may be checkerboard-embedded, generalizing some of the results of \cite{manturov1}. In this paper, we continue the project of generalizing results about embeddability properties of framed four-valent graphs to $\st$-graphs with each vertex of degree 4 or 6, now considering checkerboard-embeddability into nonorientable surfaces. In Theorem \ref{main} we show that this is equivalent to a problem on matrices. Our methods are close to those used in our previous paper \cite{firstpaper}, which were themselves based closely on those used by the second named author (V.O.M.) in \cite{manturov2} for framed four-valent graphs.

The goal of this paper is to provide a method for determining whether a given $\st$-graph $\Gamma$ has a checkerboard embedding into a nonorientable surface of genus $g$. We show that this is equivalent to a problem on matrices. To accomplish this, we fix a cycle $C$ in $\Gamma$ satisfying certain properties, called a rotating-splitting cycle. Then we define a correspondence between checkerboard embeddings of $\Gamma$ and ``permissible separations'' of a chord diagram $D'_{\Gamma, C}$, where the result of a permissible separation is a pair of chord diagrams. We then show that the number of white (black) cells in the embedding is equal to the number of circles resulting from surgery of the first (second) of these chord diagrams. The Circuit-Nullity Theorem allows us to calculate the number of circles resulting from surgery of each chord diagram in terms of the rank of their intersection matrices over $\mathbb Z_2$. From this we have the total number of cells in the embedding, from which the genus $g$ of the surface can be easily calculated.

The authors of this paper would like to thank Victor Anatolievich Vassiliev and Sergei Vladimirovich Chmutov for valuable discussions.

\section{Basic Notions}

\begin{definition}
A \emph{$\st$-atom} is a closed 2-surface $S$ into which a connected graph $\Gamma$ (the \emph{skeleton} of the $\st$-atom) is embedded in such a way that it divides $S$ into black and white cells so that cells sharing an edge have different colors.
\end{definition}

This embedding induces a $\st$-structure on the skeleton. The $\st$-structure at each vertex determines a set of $d$ angles among which we say that two angles are adjacent if they share a half-edge. Two adjacent angles never have the same color. Thus the angles around a vertex can be partitioned into two sets $A_1$ and $A_2$ in such a way that for any $\st$-atom corresponding to the $\st$-graph $\Gamma$, either all angles in $A_1$ are black and all angles in $A_2$ are white, or all angles in $A_1$ are white and all angles in $A_2$ are black. Thus given a connected $\st$-graph $\Gamma$, the $\st$-atoms corresponding to $\Gamma$ are uniquely determined by a choice of one of the two possible colorings at each vertex. Thus the main problem can be reformulated as follows:

Given a $\st$-graph $\Gamma$ in which all vertices have degree 4 or 6, choose a coloring for the angles around each vertex such that the genus of the resulting atom is minimal. If such a graph has $n$ vertices, there are $2^n$ corresponding $\st$-atoms.




\begin{definition}
An \emph{Euler circuit} $C$ of a $\st$-graph $\Gamma$ is a surjective mapping $S^1 \to \Gamma$ which is one-to-one except at the vertices, and such that every vertex of degree $d$ has $\frac{d}{2}$ preimages.
\end{definition}

\begin{definition}
Given an Euler circuit $C$ of a $\st$-graph $\Gamma$, a 4-vertex $v \in \Gamma$ is \emph{rotating} with respect to $C$ if for every $a \in C^{-1}(\{v\})$, $C(a+\epsilon)$ and $C(a-\epsilon)$ are on adjacent half-edges around $v$.
\end{definition}

\begin{definition}
Given an Euler circuit $C$ of a $\st$-graph $\Gamma$, a 6-vertex $v \in \Gamma$ is \emph{rotating} with respect to $C$ if for every $a \in C^{-1}(\{v\})$, $C(a+\epsilon)$ and $C(a-\epsilon)$ are on adjacent half-edges around $v$.
\end{definition}

\begin{definition}
Given an Euler circuit $C$ of a $\st$-graph $\Gamma$, a 6-vertex $v \in \Gamma$ is \emph{splitting} with respect to $C$ if for some $a \in C^{-1}(\{v\})$, $C(a+\epsilon)$ and $C(a-\epsilon)$ are on opposite half-edges around $v$, and for the other two points $b, c \in C^{-1}(\{v\})$, $C(b+\epsilon)$ and $C(b-\epsilon)$ are on adjacent half-edges, and $C(c+\epsilon)$ and $C(c-\epsilon)$ are on adjacent half-edges.
\end{definition}

\begin{definition}
A \emph{rotating-splitting circuit} is a circuit with respect to which every vertex is rotating or splitting.
\end{definition}

\begin{definition}
A rotating-splitting circuit induces an orientation on the half-edges around a rotating 6-vertex. If the order of the edges containing $C(a+\epsilon)$ and $C(a-\epsilon)$ agrees with this orientation, then the angle is said be \emph{untwisted}; otherwise it is \emph{twisted}. See Figures \ref{r0t}, \ref{r1t}, \ref{r2t}, \ref{r3t}.
\end{definition}

\begin{figure}
\centering
\includegraphics[trim = 0cm 8cm 0cm 6cm, clip=true, totalheight=4cm]{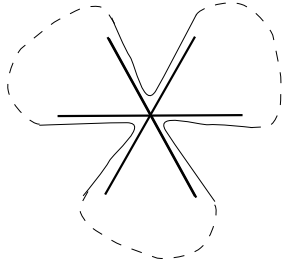}
\caption{A rotating vertex with no twisted angles}
\label{r0t}
\end{figure}

\begin{figure}
\centering
\includegraphics[trim = 0cm 8cm 0cm 2cm, clip=true, totalheight=4cm]{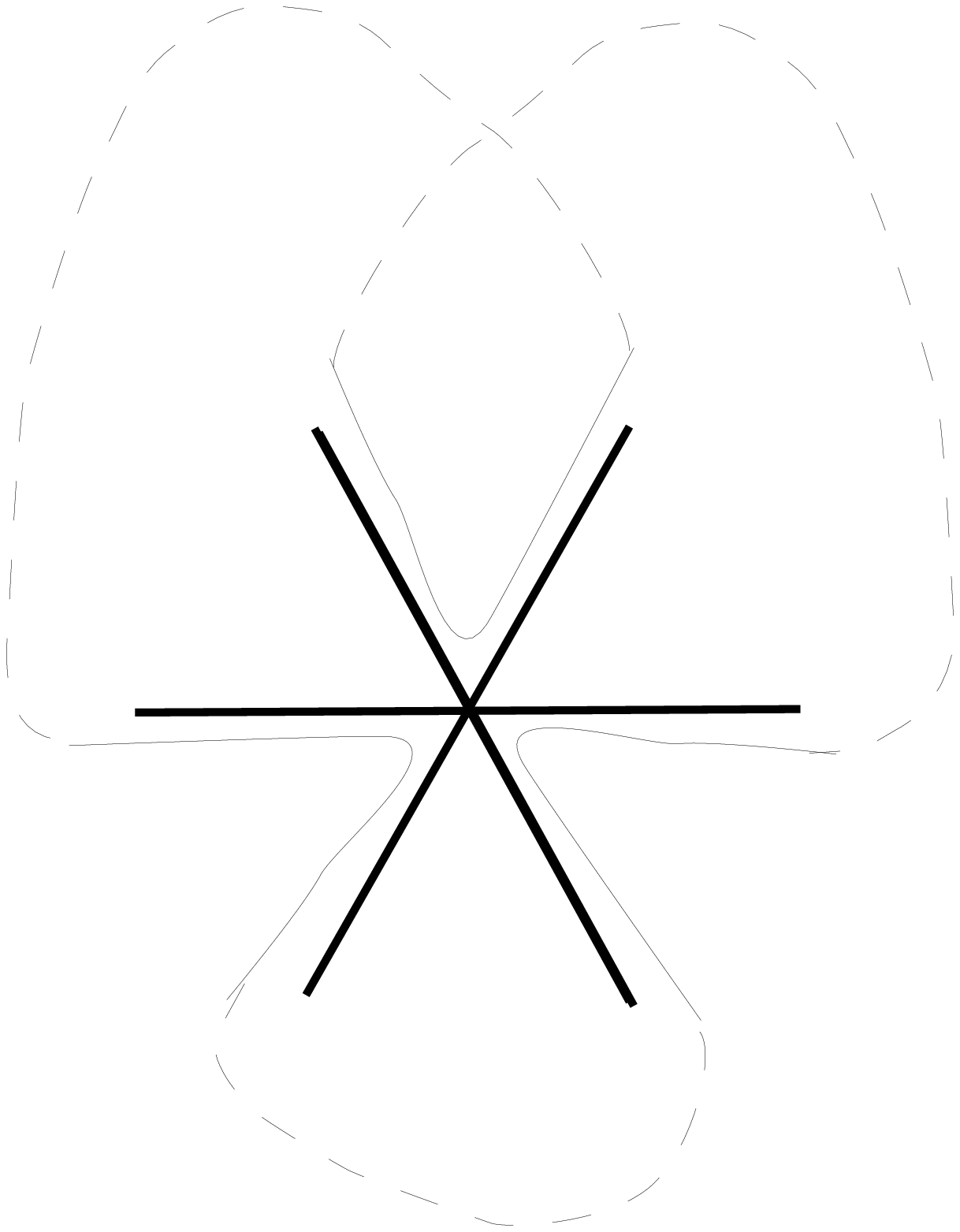}
\caption{A rotating vertex with one twisted angle}
\label{r1t}
\end{figure}

\begin{figure}
\centering
\includegraphics[trim = 0cm 8cm 0cm 1cm, clip=true, totalheight=4cm]{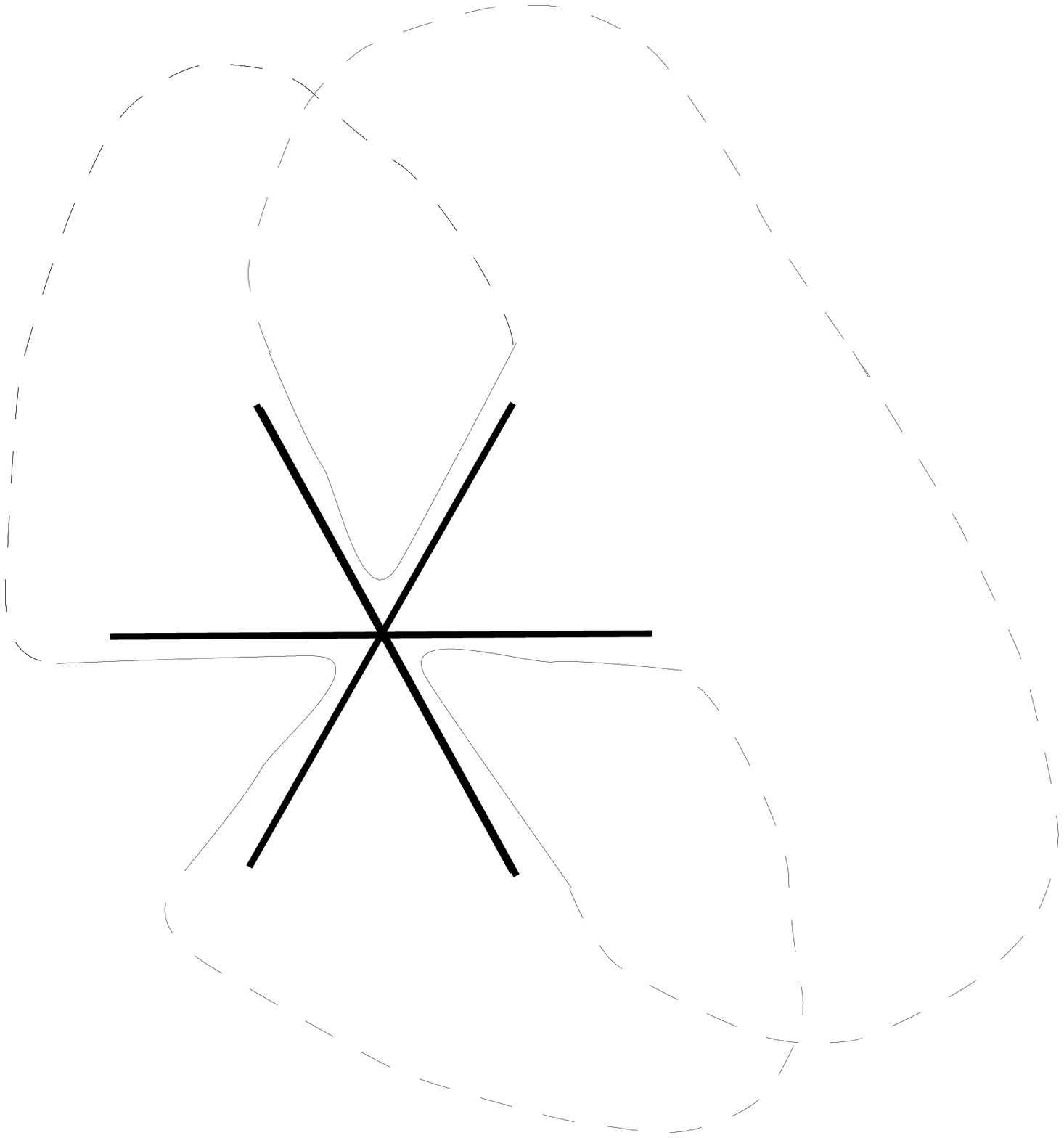}
\caption{A rotating vertex with two twisted angles}
\label{r2t}
\end{figure}

\begin{figure}
\centering
\includegraphics[trim = 0cm 6cm 0cm 2cm, clip=true, totalheight=4cm]{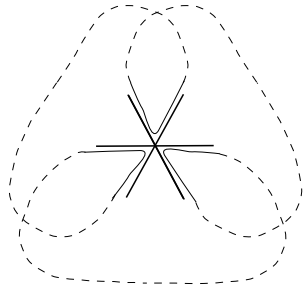}
\caption{A rotating vertex with three twisted angles}
\label{r3t}
\end{figure}

\begin{lemma}\label{rscycle-exists}
If $\Gamma$ is a connected $\st$-graph in which all vertices have degree 4 or 6, then $\Gamma$ admits a rotating-splitting circuit.
\end{lemma}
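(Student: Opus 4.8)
The plan is to reformulate the rotating--splitting condition as a purely combinatorial constraint on the transition system of an Euler circuit, and then to produce such a circuit by starting from an easy one and repeatedly merging its components.

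First I would record the following local observation. An Euler circuit is determined, together with the global connectivity data, by a choice at each vertex $v$ of a perfect matching of the half-edges at $v$ into the $d/2$ pairs through which the circuit passes. With respect to the cyclic order supplied by the $\st$-structure, I claim that the admissible pairings---rotating at a $4$-vertex, and rotating or splitting at a $6$-vertex---are exactly the \emph{non-crossing} matchings. Indeed, at a $4$-vertex the non-crossing matchings are the two matchings into adjacent pairs, which are precisely the rotating ones; at a $6$-vertex there are $C_3 = 5$ non-crossing matchings, and listing them shows that $2$ consist entirely of adjacent pairs (the rotating matchings) while the other $3$ consist of one opposite pair and two adjacent pairs (the splitting matchings). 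Thus a rotating--splitting circuit is exactly an Euler circuit whose transition system is non-crossing at every vertex, and it remains to produce one. Since $\Gamma$ is connected and all of its vertices have even degree, an Euler circuit exists; more usefully, choosing a rotating matching at every vertex gives a non-crossing transition system, which decomposes $\Gamma$ into some number $m \geq 1$ of disjoint closed circuits.

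The main step is a downward induction on $m$. If $m = 1$ the transition system is a single Euler circuit and we are done. If $m \geq 2$, I would first argue by connectedness that there is a vertex through which two distinct circuits pass: if instead every vertex had all of its chords on one circuit, then any two edges sharing a vertex would lie on the same circuit, and connectedness of $\Gamma$ would force $m = 1$. At such a vertex $v$ I then want to replace its matching by another non-crossing matching, leaving all other vertices untouched, so that two of the circuits through $v$ are merged and $m$ strictly decreases. Iterating drives $m$ down to $1$.

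The hard part will be this reduction step at $6$-vertices. At a $4$-vertex the two circuits meeting at $v$ occupy the two pairs of a non-crossing matching, and switching to the other non-crossing matching visibly splices them into a single circuit. At a $6$-vertex the situation is more delicate: a naive switch between the two rotating matchings can, depending on how the three chords are distributed among the circuits, peel off a spurious closed loop instead of merging, so that $m$ fails to decrease. I expect to handle this by a finite case analysis organized by the cyclic pattern of circuit-labels on the six half-edges: reading around $v$ there must be two cyclically adjacent half-edges carrying different labels, and I would check that for each pattern there is a non-crossing target matching---in some cases necessarily a \emph{splitting} one rather than a rotating one---whose switch merges two circuits. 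This is the only place where the splitting configurations are genuinely needed, and verifying that a merging move always exists (there being only the two vertex degrees and a bounded list of non-crossing matchings to inspect) would complete the induction, and hence the proof.
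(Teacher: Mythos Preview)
Your proposal is correct and follows essentially the same strategy as the paper: start from an arbitrary rotating/splitting transition system, obtain a family of closed trails, and repeatedly reduce their number by locally modifying the matching at a vertex shared by two trails, with the $4$-vertex case immediate and the $6$-vertex case handled by a finite case analysis. Your identification of the admissible local pairings with the \emph{non-crossing} perfect matchings of the half-edges (two at a $4$-vertex; five at a $6$-vertex, namely the two rotating and three splitting ones) is a clean reformulation that the paper does not state explicitly, but the underlying argument is the same. The paper organizes the $6$-vertex case analysis by the ``distance profile'' of the external return-path pairing (all adjacent; two adjacent and one opposite; one adjacent and two at distance~$2$; one opposite and two at distance~$2$; all opposite) and in each case exhibits a non-crossing internal matching that produces a single cycle through $v$; your proposed organization by circuit-label pattern around $v$ would reach the same conclusion. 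Your remark that a splitting matching is sometimes unavoidable is exactly right---for instance, when the external pairing has profile ``two adjacent, one opposite'' neither rotating matching yields a single cycle.
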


\begin{proof}
Assign to each vertex any rotating or splitting structure. This gives a partition of the edges of $\Gamma$ into edgewise disjoint cycles. If there is only one such cycle, we are done. If there is more than one cycle, since the graph is connected, there must be a vertex $v$ shared by different cycles. If $v$ has degree 4, it must be rotating, and we can join the two cycles meeting at $v$ by assigning to $v$ the other possible rotating structure. If $v$ has degree 6, we consider the cycles given by starting at $v$, exiting through one of its incident edges, and following the rotating-splitting structure until we come back to $v$. There are three such cycles, up to a change in orientation. If each of these cycles contains a pair of adjacent edges at $v$, we can assign to $v$ the rotating structure shown on the right side of Figure \ref{r0t-intro}, so that the three cycles are joined together, and $v$ becomes a rotating vertex with no twisted angles. Note that before making the change, $v$ may have some structure other than that shown on the left side of Figure $\ref{r0t-intro}$; the left side of the figure and the others referenced in this proof are merely examples. If exactly two of the three cycles contain a pair of adjacent edges around $v$, then the third must contain a pair of opposite edges, and we can assign to $v$ the splitting structure shown on the right side of Figure \ref{s0t-intro} to join the cycles together. If exactly one of the three cycles contains a pair of adjacent edges around $v$, then the other two must contain pairs of edges which are neither opposite nor adjacent. In this case we can join the cycles by assigning the rotating structure shown on the right side of Figure \ref{r1t-intro}, so that $v$ becomes a rotating vertex with one twisted angle. If none of the cycles contains a pair of adjacent edges, then we have two possibilities: Each of the cycles contains a pair of opposite edges, or one of the cycles contains a pair of opposite edges and the other two contain a pair of edges which are neither opposite nor adjacent. If each of the cycles contains a pair of opposite edges, we can assign to $v$ the rotating structure shown on the right side of Figure \ref{r3t-intro} to join the cycles. If one of the cycles contains a pair of opposite edges and the other contains a pair of edges which are neither opposite nor adjacent, we can assign to $v$ the rotating structure shown on the right side of Figure \ref{r2t-intro} to join the cycles.
\end{proof}

\begin{figure}
\centering
\includegraphics[trim = 0cm 19cm 0cm 1cm, clip = true, totalheight = 4cm]{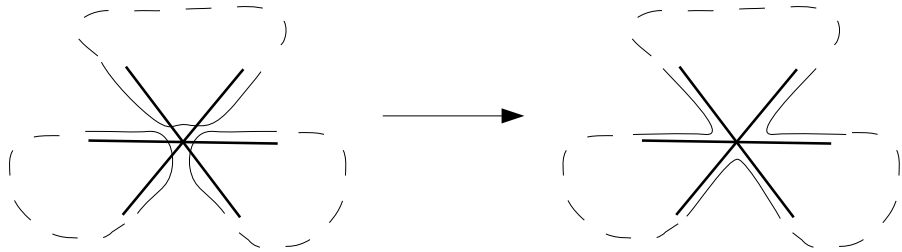}
\caption{Decreasing the number of cycles by introducing a rotating structure with no twisted edges at a vertex, as referenced in Lemma \ref{rscycle-exists}}.
\label{r0t-intro}
\end{figure}

\begin{figure}
\centering
\includegraphics[trim = 0cm 17cm 0cm 3cm, clip = true, totalheight = 4cm]{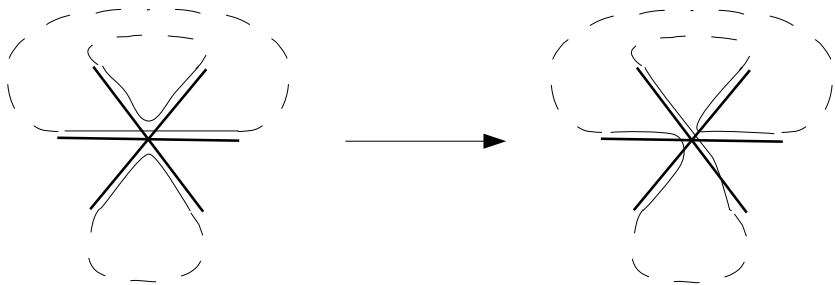}
\caption{Decreasing the number of cycles by introducing a splitting structure at a vertex, as referenced in Lemma \ref{rscycle-exists}}.
\label{s0t-intro}
\end{figure}

\begin{figure}
\centering
\includegraphics[trim = 2cm 17cm 0cm 2cm, clip = true, totalheight = 4cm]{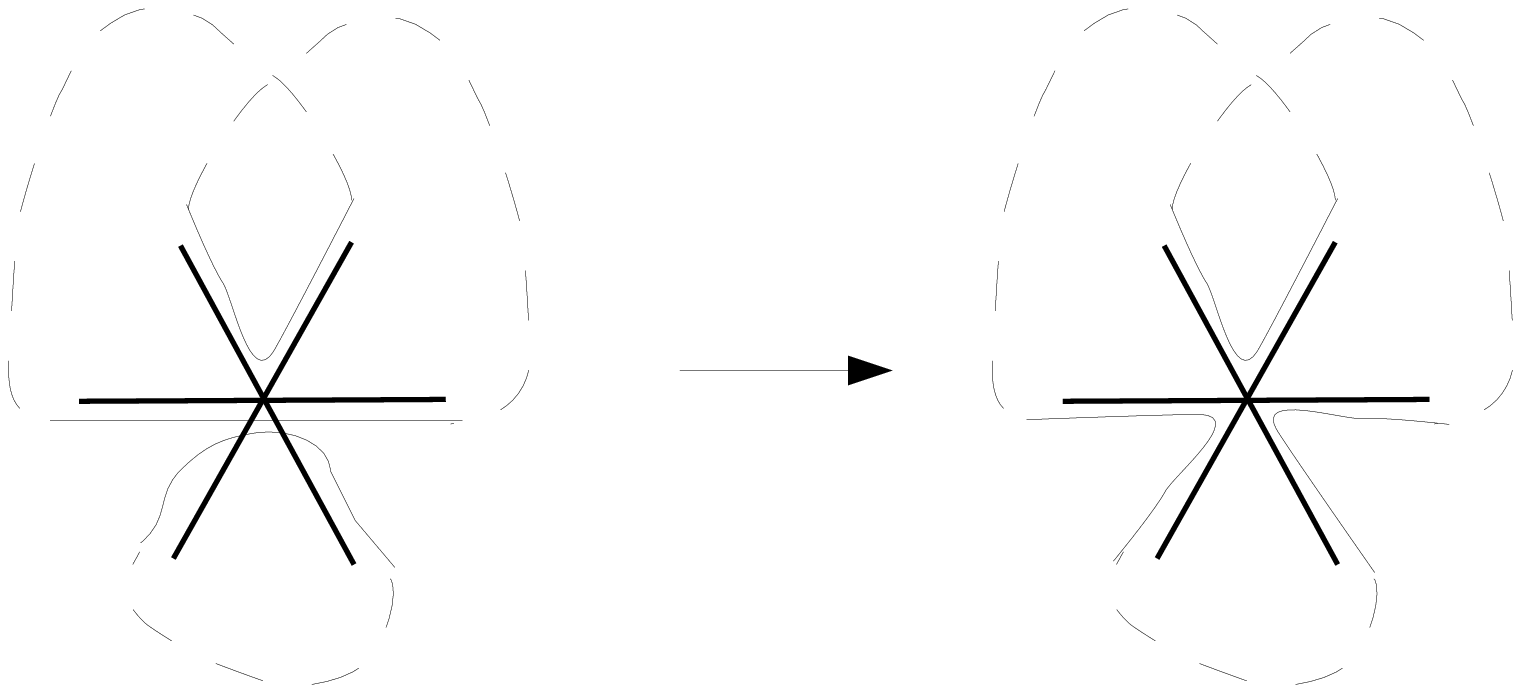}
\caption{Decreasing the number of cycles by introducing a rotating structure with one twisted edge at a vertex, as referenced in Lemma \ref{rscycle-exists}}.
\label{r1t-intro}
\end{figure}

\begin{figure}
\centering
\includegraphics[trim = 0cm 15cm 0cm 5cm, clip = true, totalheight = 4cm]{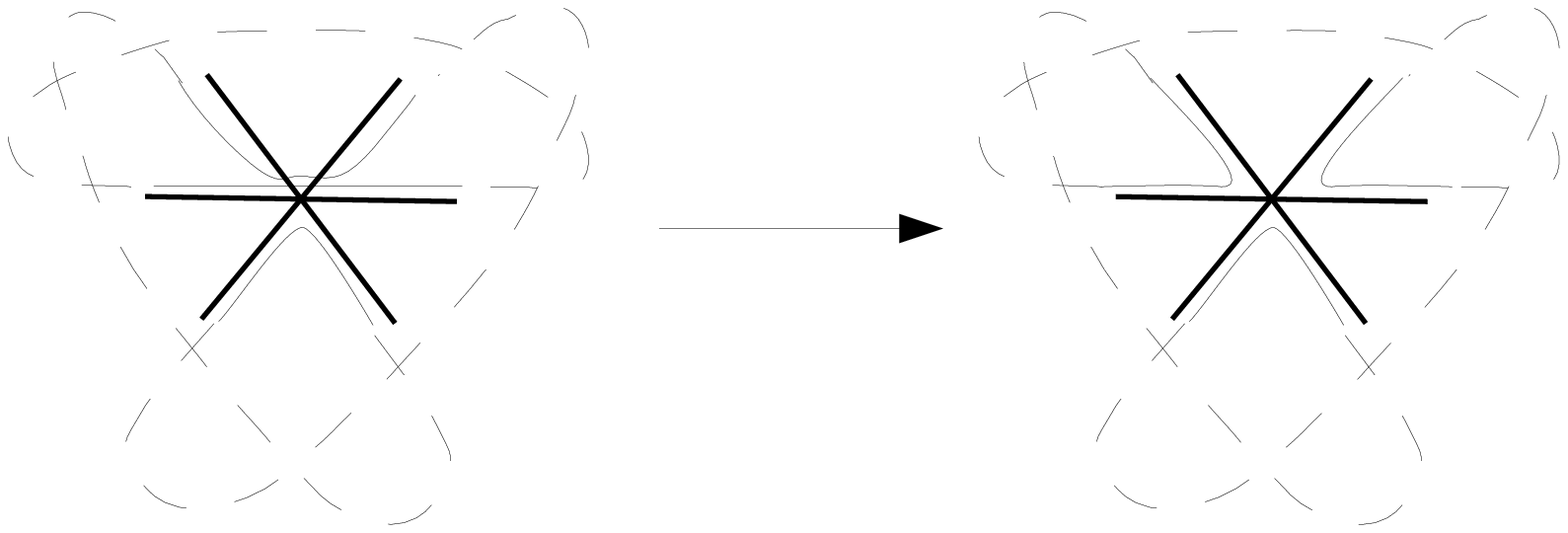}
\caption{Decreasing the number of cycles by introducing a rotating structure with three twisted edges at a vertex, as referenced in Lemma \ref{rscycle-exists}}.
\label{r3t-intro}
\end{figure}

\begin{figure}
\centering
\includegraphics[trim = 0cm 17cm 0cm 1.6cm, clip = true, totalheight = 4cm]{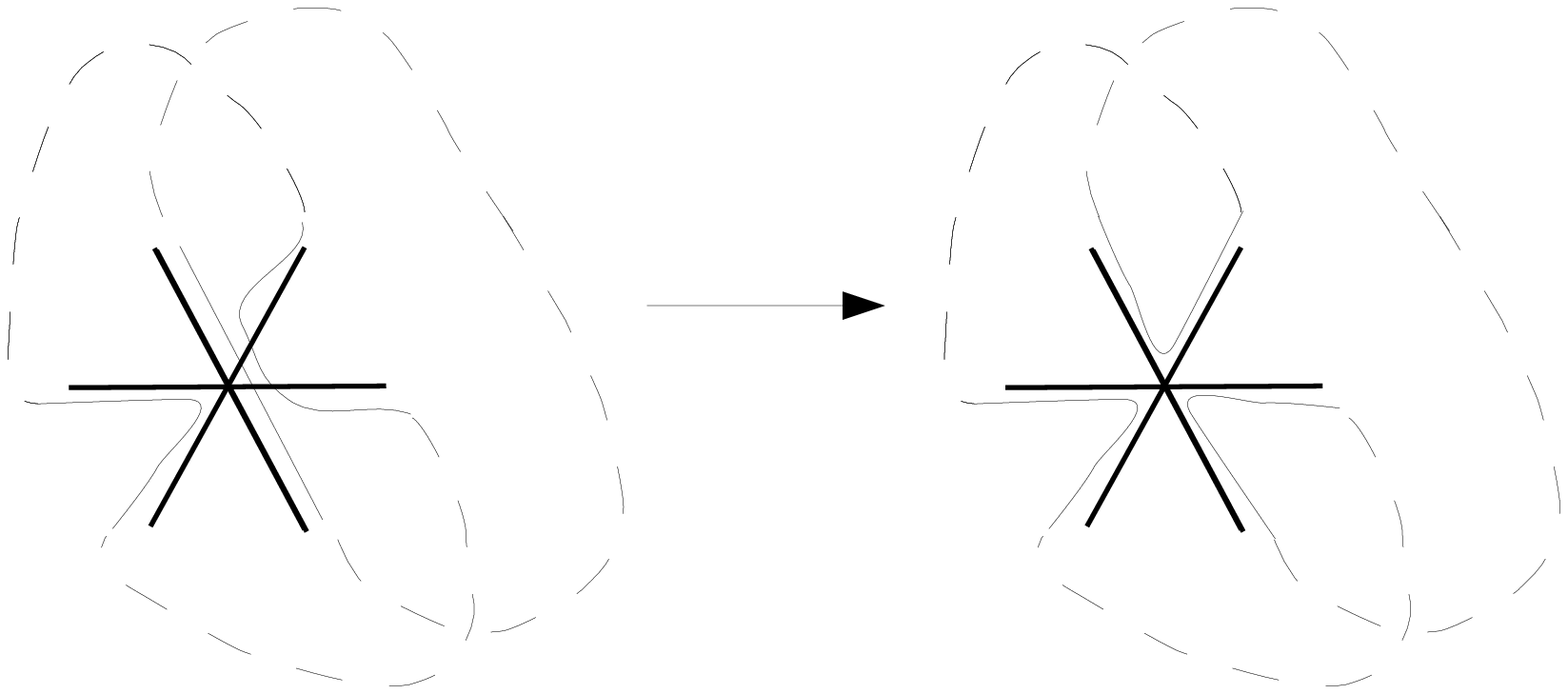}
\caption{Decreasing the number of cycles by introducing a rotating structure with two twisted edges at a vertex, as referenced in Lemma \ref{rscycle-exists}}.
\label{r2t-intro}
\end{figure}



\begin{definition}
A \emph{chord diagram} is a cubic graph $D$ with a distinguished Hamiltonian cycle; i.e. an embedding $S^1 \to D$ which covers all the vertices of $D$.
\end{definition}

\begin{definition}
A \emph{signed chord diagram} is a chord diagram in which each edge not in the distinguished cycle is assigned a positive or negative sign.
\end{definition}

\begin{definition}
A $\st$-chord diagram is a graph $D$ with a distinguished simple cycle (i.e. an embedding $S^1\to D$), such that every vertex in $D$ has degree 3 or 4 and for every edge $e$ in $D$, one of the following holds:
\begin{enumerate}
\item $e$ is in the distinguished cycle.
\item Both of the vertices on $e$ are in the distinguished cycle, and both have degree 3.
\item One of the vertices on $e$ is in the distinguished cycle, the other is not, and both have degree 3.
\item Both of the vertices on $e$ are in the distinguished cycle, one has degree 3, and the other has degree 4.
\end{enumerate}
\end{definition}

\begin{definition}
A \emph{signed $\st$-chord diagram} is a $\st$-chord diagram in which each edge not in the distinguished cycle is assigned a positive or negative sign.
\end{definition}

\begin{definition}
An \emph{arc} of a $\st$-chord diagram $\Gamma$ is an edge in the distinguished cycle of $\Gamma$.
\end{definition}

\begin{definition}
A \emph{chord} of a $\st$-chord diagram $\Gamma$ is an edge not in the distinguished cycle of $\Gamma$, connecting two vertices of degree 3 which are in the cycle.
\end{definition}

\begin{definition}
A \emph{triad} of a $\st$-chord diagram $\Gamma$ is a vertex $v$ not in the distinguished cycle of $\Gamma$, together with the three edges incident to $v$. The vertex $v$ is called a \emph{triad point}.
\end{definition}

\begin{definition}
A \emph{double chord} of a $\st$-chord diagram $\Gamma$ is a pair of edges not in the distinguished cycle of $\Gamma$, which are incident to a shared vertex $v$. The vertex $v$ is called the \emph{principal vertex} of the double chord.
\end{definition}

Given a $\st$-graph $\Gamma$ with all vertices of degree 4 or 6, and given a rotating-splitting circuit $C$ of $\Gamma$, we define a $\st$-chord diagram $D_{\Gamma, C}$ as follows:

For each 4-vertex $v$ in $\Gamma$, the two points in $S^1$ which are mapped to $v$ by $C$ are connected by a chord, whose sign is positive if and only if the two half-edges through which $C$ enters $v$ are not adjacent. For any rotating 6-vertex $v$ in $\Gamma$, the three points in $S^1$ which are mapped to $v$ by $C$ are connected by a triad, and the edge connecting a vertex $a \in C^{-1}(\{v\})$ to the triad point has positive sign if and only the angle into which $C$ maps a neighborhood of $a$ is not twisted. For any splitting 6-vertex $v$ in $\Gamma$, the three points in $S^1$ which are mapped to $C$ are connected by a double chord, whose principal vertex is $a \in C^{-1}(\{v\})$ such that $C(a-\epsilon)$ and $C(a+\epsilon)$ are in opposite half-edges around $v$, where the sign of the edge connecting $a$ another vertex $b \in C^{-1}(\{v\})$ is positive if and only if $C(b+\epsilon)$ is adjacent to $C(a-\epsilon)$ [Figure].

\begin{definition}
An \emph{expansion} of as signed $\st$-chord diagram $D_{\Gamma, C}$ a signed chord diagram $D'_{\Gamma, C}$ such that
\begin{enumerate}
\item For every chord in $D,{\Gamma,C}$ containing vertices $a$ and $b$, there is a chord of the same sign in $D'_{\Gamma,C}$ of the same sign connecting vertices $a$ and $b$.
\item For every triad $e$ in $D_{\Gamma,C}$ containing at least one edge with positive sign, for some labeling $a,b,c$ of the vertices of $e$ such that the edge connecting $a$ to the triad point is positive, $D'_{\Gamma,C}$ contains a chord connecting $a \pm \epsilon$ to $b$ and a chord connecting $a \mp \epsilon$ to $c$, with $\pm \epsilon$ chosen in such a way that the chords are not linked. Furthermore, the chords connecting $a$ to $b$ and $a$ to $c$ in $D'_{\Gamma,C}$ have the same signs as the edges connecting the triad point of $e$ to $b$ and $c$ in $D_{\Gamma,C}$, respectively.
\item For every triad $e$ in $D_{\Gamma,C}$ in which all edges have negative sign, for some labeling $a,b,c$ of the vertices of $e$, $D'_{\Gamma,C}$ contains a chord connecting $a \pm \epsilon$ to $b$ and a chord connecting $a \mp \epsilon$ to $c$, with $\pm \epsilon$ chosen in such a way that the chords are linked. Furthermore, the chords connecting $a$ to $b$ and $a$ to $c$ in $D'_{\Gamma,C}$ have signs opposite to the edges connecting the triad point of $e$ to $b$ and $c$ in $D_{\Gamma,C}$, respectively.
\item For every double chord $e$ in $D_{\Gamma,C}$ with principal vertex $a$, for some labeling $b, c$ of the nonprincipal vertices of $e$, there is a chord in $D'_{\Gamma,C}$ connecting $a - \epsilon$ to $b$ and a chord in $D'_{\Gamma,C}$ connecting $a + \epsilon$ to $c$. These chords are not linked.
\end{enumerate}
\end{definition}

\begin{definition}
A \emph{permissible separation} of a signed chord diagram $D'_{\Gamma, C}$ arising as an expansion of a signed $\st$-chord diagram $D_{\Gamma, C}$ is a pair of signed chord diagrams $D_W$ and $D_B$ such that
\begin{enumerate}
\item Every chord in $D'_{\Gamma, C}$ is in exactly one of $D_W$ and $D_B$.
\item Two chords in $D'_{\Gamma, C}$ which come from the same triad in $D_{\Gamma, C}$ are both in $D_W$, or both in $D_B$.
\item Of any two chords in $D'_{\Gamma, C}$ which come from the same double chord in $D_{\Gamma, C}$, one chord is in $D_W$ and the other is in $D_B$.
\end{enumerate}
\end{definition}

Suppose $\Gamma$ is checkerboard-embedded in a closed surface $S$. Then the rotating-splitting circuit $C$ gives a mapping from $S^1$ to $S$ which is one-to-one except at the preimages of vertices $\Gamma$. This mapping can be smoothed to give an embedding of $S^1$ into $S$, as in Figure \ref{fig:smoothed_rscycle}. Observe that the circle $S^1 \subset S$ divides the surface into a black part and a white part. We can draw the chords of $D'_{\Gamma, C}$ as small edges lying in neighborhoods of vertices of $\Gamma$, see Figure \ref{fig:small_chord}.

\begin{figure}
	\caption{Chords drawn as small edges at:}
	\begin{subfigure}{0.3\textwidth}
	\caption{a 4-vertex}
	\centering
	\includegraphics[trim = 8cm 20cm 0cm 3cm, clip=true, totalheight=3cm]{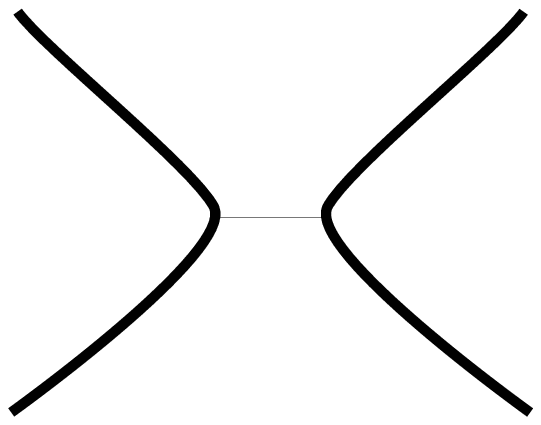}
	\end{subfigure}
	\quad
	\begin{subfigure}{0.3\textwidth}
	\caption{a rotating 6-vertex}
	\centering
	\includegraphics[trim = 5cm 17cm 4cm 5cm, clip=true, totalheight=3cm]{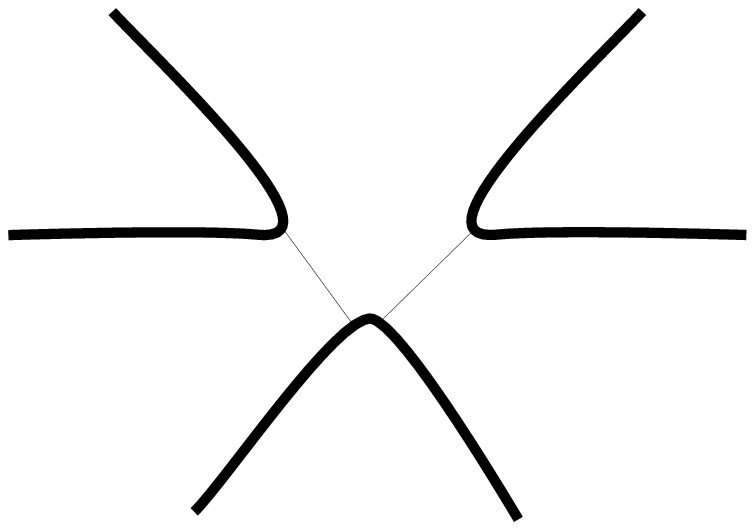}
	\end{subfigure}
		\quad
	\begin{subfigure}{0.3\textwidth}
	\caption{a splitting 6-vertex}
	\centering
	\includegraphics[trim = 6cm 17cm 4cm 4cm, clip=true, totalheight=3cm]{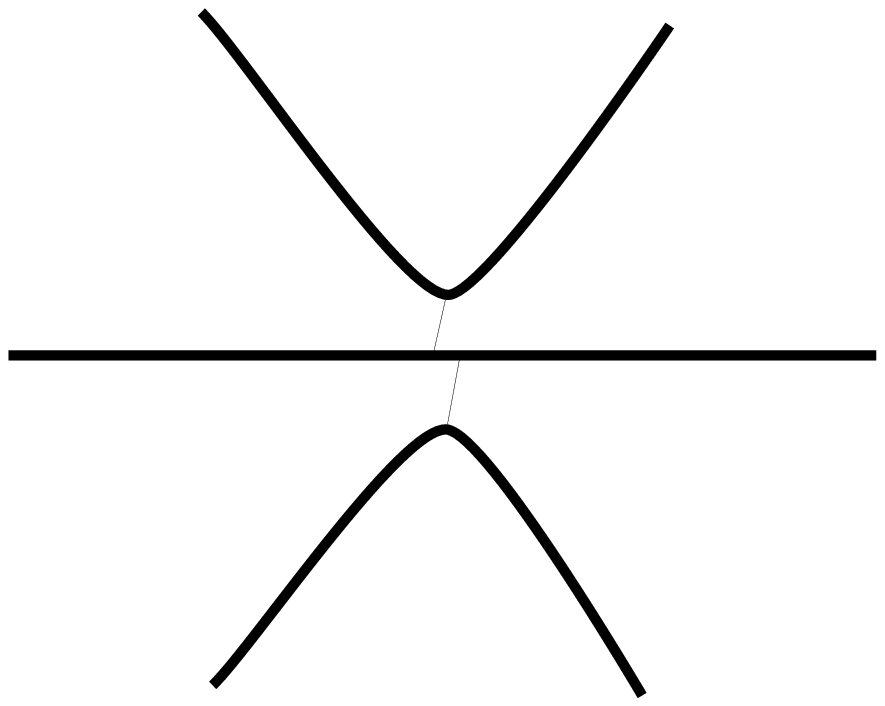}
	\end{subfigure}
	\label{fig:small_chord}
\end{figure}

The coloring of $S$ divides the chords of $D'_{\Gamma, C}$ into two families: those lying in the white part and those lying in the black part. Observe that the two chords in the neighborhood of a rotating vertex are in the same part, and the two chords in the neighborhood of a splitting vertex are in different parts. Thus we have a permissible separation of $D'_{\Gamma, C}$.

\begin{figure}
\centering
\includegraphics[trim = 2cm 7cm 0cm 7cm, clip=true, totalheight=6cm]{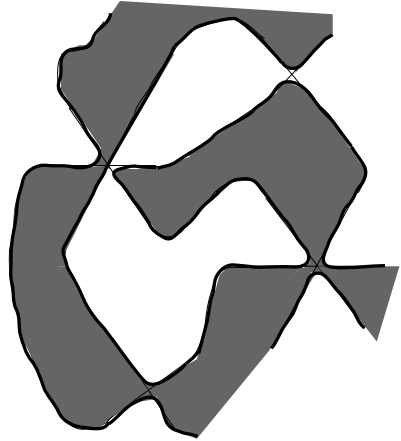}
\caption{The circle $S^1 \subset S$ divides the surface into a black part and a white part.}
\label{fig:smoothed_rscycle}
\end{figure}

Vice versa, given a $\st$-graph $\Gamma$ with all vertices of degree 4 or 6 and which satisfies the source-sink condition, a rotating-splitting circuit $C$ of $\Gamma$, and a permissible separation of $D'_{\Gamma, C}$, we can recover the coloring of the angles around each vertex of $\Gamma$, and thus we can recover the surface $S$. Thus given a $\st$-graph $\Gamma$ with all vertices of degree 4 or 6 and which satisfies the source-sink condition, and an expansion $D'_{\Gamma, C}$ of its $\st$-chord diagram, we have a one-to-one correspondence between atoms of $\Gamma$ and permissible separations of $D'_{\Gamma, C}$.

Note that the two chords to be drawn in the neighborhood of any rotating 6-vertex $v$ do not cross in $S$, as shown in Figures \ref{r0t-lifecycle}, \ref{r1t-lifecycle}, \ref{r2t-lifecycle}, and \ref{r3t-lifecycle}. Thus we have an embedding of $D'_{\Gamma, C}$ into $S$. Furthermore, since the embedding of $\Gamma$ divides $S$ into 2-cells, the embedding of $D'_{\Gamma, C}$ does as well.

\begin{figure}
\centering
\includegraphics[trim = 0cm 20cm 0cm 1.5cm, clip=true, totalheight=4cm]{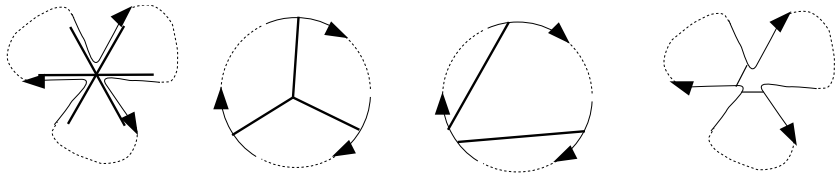}
\caption{The chords drawn in the neighborhood of a rotating 6-vertex with no twisted edges do not cross.}
\label{r0t-lifecycle}
\end{figure}

\begin{figure}
\centering
\includegraphics[trim = 0cm 21cm 0cm 1cm, clip=true, totalheight=4cm]{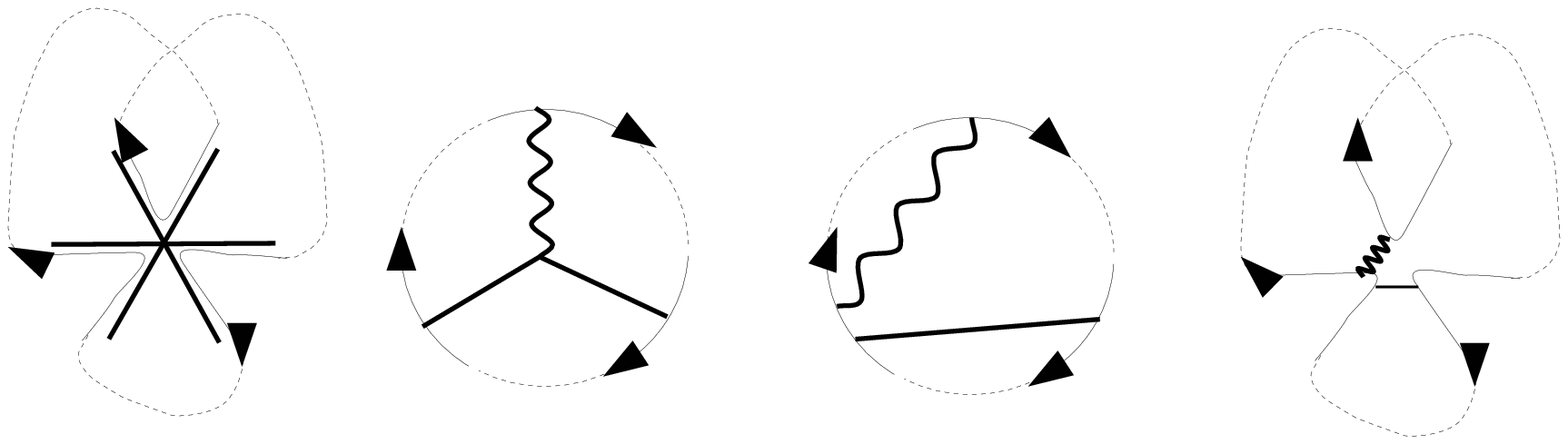}
\caption{The chords drawn in the neighborhood of a rotating 6-vertex with one twisted edge do not cross.}
\label{r1t-lifecycle}
\end{figure}

\begin{figure}
\centering
\includegraphics[trim =2cm 21cm 0cm 1cm, clip=true, totalheight=5cm]{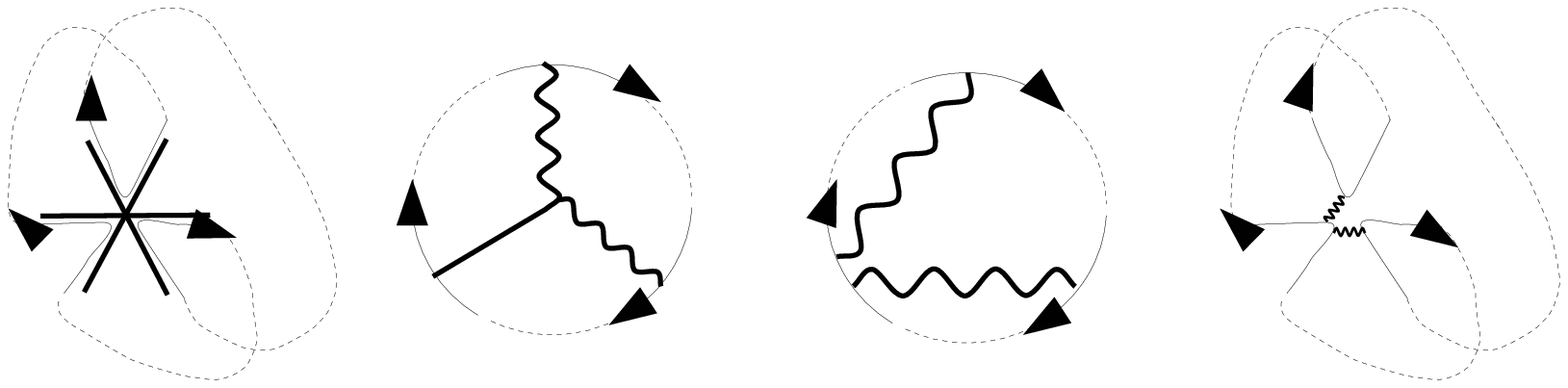}
\caption{The chords drawn in the neighborhood of a rotating 6-vertex with two twisted edges do not cross.}
\label{r2t-lifecycle}
\end{figure}

\begin{figure}
\centering
\includegraphics[trim = 3cm 22cm 0cm 1.5cm, clip=true, totalheight=4cm]{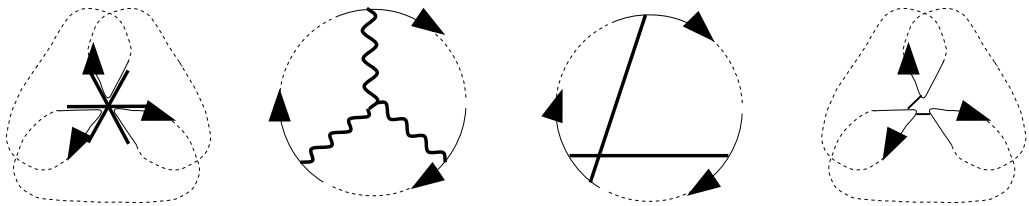}
\caption{The chords drawn in the neighborhood of a rotating 6-vertex with three twisted edges do not cross.}
\label{r3t-lifecycle}
\end{figure}

\begin{definition}
Given a chord diagram $D$, \emph{surgery} of $D$ is the following process: For each chord $e$ connecting points $a,b$, delete a neighborhood of $e$ and connect the obtained endpoints $a+\epsilon$ to $b-\epsilon$ and $a-\epsilon$ to $b+\epsilon$ if $e$ is positive, and $a+\epsilon$ to $b+\epsilon$ and $a-\epsilon$ to $b-\epsilon$ if $e$ is negative. This produces a family of circles; these are called the \emph{result of surgery} of $D$.
\end{definition}

\begin{definition}
To form the \emph{intersection matrix} of a signed chord diagram $D$ with $n$ chords, first enumerate the chords $1,\ldots,n$. Then the \emph{intersection matrix} $M(D)$ is an $n \times n$ matrix over $\mathbb Z_2$, such that $M_{ii}=1$ if and only if the chord $i$ is negative, and $M_{ij}=M_{ji}=1$ for $i \neq j$ if and only if the chords $i$ and $j$ are linked.
\end{definition}

\begin{theorem}[Circuit-Nullity Theorem \cite{cohn},\cite{soboleva},\cite{traldi}]
The number of components in the manifold obtained from a signed chord diagram $D$ by surgery of the circle is one plus the corank of $M(D)$.
\end{theorem}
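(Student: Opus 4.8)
The plan is to realize the surgery circles as the boundary components of an auxiliary surface and then to count them homologically. Concretely, I would take the distinguished circle $S^1$ to be the boundary of a disk $\Sigma_0$, and for each chord of $D$ attach a band (a $1$-handle) joining the two arcs of $\partial\Sigma_0$ at which the chord meets $S^1$: an untwisted band when the chord is positive, a band with a single half-twist when the chord is negative, the bands routed so that two of them cross transversally over the disk if and only if the corresponding chords are linked. Call the resulting compact connected surface $\Sigma$. The crucial observation is that $\partial\Sigma$ is precisely the $1$-manifold produced by surgery of $D$: the way $\partial\Sigma_0$ is rerouted along the free edge of a band reproduces exactly the reconnection rules in the definition of surgery (an untwisted band reconnects $a+\epsilon$ to $b-\epsilon$ and $a-\epsilon$ to $b+\epsilon$, a twisted band reconnects $a+\epsilon$ to $b+\epsilon$ and $a-\epsilon$ to $b-\epsilon$). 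Hence the number of components of the surgery manifold equals the number $b$ of boundary components of $\Sigma$, and it suffices to prove $b = 1 + \operatorname{corank} M(D)$.

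Next I would identify the $\mathbb Z_2$-intersection form of $\Sigma$ with $M(D)$. Since $\Sigma$ deformation retracts onto the wedge of the core circles of its $n$ bands, $H_1(\Sigma;\mathbb Z_2)\cong \mathbb Z_2^n$ with a basis $c_1,\dots,c_n$, where $c_i$ is the loop running once through the $i$-th band and back across the disk. I would then verify the two families of entries of the mod-$2$ self-intersection form in this basis. The self-intersection $c_i\cdot c_i$ counts the half-twists of the $i$-th band modulo $2$, so it equals $1$ exactly when the chord is negative, matching $M_{ii}$. For $i\neq j$, the value $c_i\cdot c_j$ is $1$ exactly when the two bands are attached in linked position, i.e. when chords $i$ and $j$ are linked, matching $M_{ij}=M_{ji}$. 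Thus the Gram matrix of the intersection form in the basis $\{c_i\}$ is exactly $M(D)$, so $\operatorname{corank} M(D)$ is the dimension of the radical of this form.

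Finally I would compute the radical homologically. Over $\mathbb Z_2$ every surface is orientable, so Poincar\'e--Lefschetz duality applies even in the nonorientable (twisted-band) case: the adjoint $H_1(\Sigma;\mathbb Z_2)\to H^1(\Sigma;\mathbb Z_2)$ of the intersection form is identified with the map $j_*\colon H_1(\Sigma;\mathbb Z_2)\to H_1(\Sigma,\partial\Sigma;\mathbb Z_2)$, whence the radical equals $\ker j_* = \operatorname{im} i_*$ for $i_*\colon H_1(\partial\Sigma;\mathbb Z_2)\to H_1(\Sigma;\mathbb Z_2)$, by exactness of the sequence of the pair. To find this dimension I would use
\[
H_2(\Sigma,\partial\Sigma;\mathbb Z_2)\xrightarrow{\ \partial\ } H_1(\partial\Sigma;\mathbb Z_2)\xrightarrow{\ i_*\ } H_1(\Sigma;\mathbb Z_2),
\]
where $\partial$ sends the fundamental class to the sum of all $b$ boundary circles, which is nonzero, so $\ker i_*$ is $1$-dimensional and $\operatorname{rank} i_* = b-1$. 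Combining the steps, $\operatorname{corank} M(D)=\dim(\text{radical})=b-1$, which rearranges to the desired $b = 1 + \operatorname{corank} M(D)$.

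I expect the main obstacle to lie in the topological dictionary rather than in the homological algebra: one must check carefully that the band attachments (a half-twist for each negative chord, a crossing for each linked pair) reproduce the surgery reconnection rules verbatim, and that no stray contributions to the self-intersection form arise from the corners where bands meet the disk, so that the Gram matrix is $M(D)$ exactly. A cleaner-to-state but more computational alternative, avoiding surfaces altogether, is induction on the number of chords $n$: deleting one chord removes a symmetric row--column pair from $M(D)$ and alters the surgery-circle count by $\pm 1$ according to whether that chord's row lies in the $\mathbb Z_2$-span of the remaining rows; matching these two effects in each case yields the same identity.
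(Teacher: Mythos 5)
The paper does not prove this statement at all: it is imported as a known result (the Cohn--Lempel / circuit-nullity theorem) with citations to Cohn--Lempel, Soboleva, and Traldi, so there is no internal proof to compare against. Your band-surface argument is, however, correct and is essentially the standard topological proof appearing in that cited literature: realize $D$ as a disk with one band per chord (half-twisted for negative chords, feet interleaved exactly for linked chords), observe that $\partial\Sigma$ is the surgery $1$-manifold, identify the Gram matrix of the mod-$2$ intersection form on $H_1(\Sigma;\mathbb Z_2)\cong\mathbb Z_2^n$ with $M(D)$, and then compute the radical as $\operatorname{im}\bigl(H_1(\partial\Sigma;\mathbb Z_2)\to H_1(\Sigma;\mathbb Z_2)\bigr)$, of dimension $b-1$. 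The homological step is sound: $\mathbb Z_2$-Poincar\'e--Lefschetz duality needs no orientability, the radical of the form is $\ker j_*=\operatorname{im} i_*$ by exactness, and $\ker i_*=\operatorname{im}\partial$ is one-dimensional because $\Sigma$ is connected (so $H_2(\Sigma,\partial\Sigma;\mathbb Z_2)\cong\mathbb Z_2$, generated by the relative fundamental class, whose boundary is the nonzero sum of all $b$ boundary circles). The two points you flag as needing care are indeed the only delicate ones, and both check out: a once-half-twisted band gives the core circle a M\"obius-band neighborhood, hence self-intersection $1$ mod $2$, matching $M_{ii}$; and the boundary rerouting along an untwisted (resp.\ twisted) band reconnects $a+\epsilon$ to $b-\epsilon$ and $a-\epsilon$ to $b+\epsilon$ (resp.\ $a+\epsilon$ to $b+\epsilon$ and $a-\epsilon$ to $b-\epsilon$), which is verbatim the paper's definition of surgery. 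Your proposed alternative by induction on chords is essentially Traldi's combinatorial proof; either route is a legitimate self-contained justification of a theorem the paper merely quotes.
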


\begin{lemma}\label{sep-to-genus}
Given a $\st$-graph $\Gamma$ in which all vertices have degree 4 or 6 and a rotating-splitting circuit $C$ of $\Gamma$, and a checkerboard embedding of $\Gamma$ into a nonorientable surface $S$, the nonorientable genus of $S$ is given by
\[\mbox{rank}(M(D_W))+\mbox{rank}(M(D_B))\]
where $D_W$ and $D_B$ are the results of the permissible separation of $D'_{\Gamma, C}$ induced by the embedding.
\end{lemma}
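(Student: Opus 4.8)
The plan is to compute the Euler characteristic of $S$ in two ways and extract the nonorientable genus from the identity $\chi(S)=2-g$. The bridge between the combinatorics of the separation and the topology of $S$ is the observation, foreshadowed in the discussion preceding the lemma, that the white (resp. black) cells of the embedding are in bijection with the circles produced by surgery of $D_W$ (resp. $D_B$). Granting this, the Circuit-Nullity Theorem converts each count of circles into a corank, and a routine vertex-edge-face count finishes the argument.

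First I would establish the cell--circle correspondence, which I expect to be the main obstacle. Recall that the smoothed circuit $C$ is embedded as a circle $S^1 \subset S$ separating $S$ into a white part $P_W$ and a black part $P_B$, with the chords of $D_W$ drawn as small edges inside $P_W$. I would argue that traversing $S^1$ and reconnecting at each white chord exactly as prescribed by surgery traces out the boundary curves of the white cells: near a vertex, a white chord records the white angle through which the boundary of a white cell turns, and the sign of the chord records precisely which of the two reconnections of the two strands of $C$ occurs at that angle, as dictated by whether the angle is twisted. Since the embedding is cellular, each white cell is a disk bounded by a single such curve, so the number of circles resulting from surgery of $D_W$ equals the number $F_W$ of white cells; the same argument inside $P_B$ gives $F_B$ for $D_B$. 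The delicate part is checking that the sign conventions in the definitions of surgery and of the expansion $D'_{\Gamma, C}$ match the twisted/untwisted and adjacent/opposite data at $4$-, rotating-$6$-, and splitting-$6$-vertices, case by case, using Figures \ref{r0t-lifecycle}--\ref{r3t-lifecycle}.

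Next I would invoke the Circuit-Nullity Theorem. Writing $c_W$ and $c_B$ for the numbers of chords of $D_W$ and $D_B$, it gives $F_W = 1 + (c_W - \mbox{rank}(M(D_W)))$ and $F_B = 1 + (c_B - \mbox{rank}(M(D_B)))$, since the corank of a $c\times c$ matrix is $c$ minus its rank.

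Finally I would carry out the Euler characteristic bookkeeping. Let $n_4$ and $n_6$ be the numbers of $4$- and $6$-vertices of $\Gamma$, so that $V = n_4 + n_6$ and $E = 2n_4 + 3n_6$. Each $4$-vertex contributes one chord to $D'_{\Gamma, C}$ and each $6$-vertex contributes two, whence $c_W + c_B = n_4 + 2n_6$. Since the embedding is cellular, $\chi(S) = V - E + (F_W + F_B)$, and substituting the expressions above gives
\[
\chi(S) = (n_4 + n_6) - (2n_4 + 3n_6) + 2 + (n_4 + 2n_6) - \mbox{rank}(M(D_W)) - \mbox{rank}(M(D_B)),
\]
where the vertex-edge-chord terms cancel to leave $\chi(S) = 2 - \mbox{rank}(M(D_W)) - \mbox{rank}(M(D_B))$. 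As $S$ is nonorientable of genus $g$, we have $\chi(S) = 2 - g$, and comparing the two expressions yields $g = \mbox{rank}(M(D_W)) + \mbox{rank}(M(D_B))$, as claimed. The only nonroutine ingredient is the cell--circle correspondence; everything after it is linear bookkeeping.
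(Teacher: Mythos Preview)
Your proposal is correct and follows essentially the same approach as the paper: establish the cell--circle correspondence, apply the Circuit-Nullity Theorem, and read off the genus from an Euler characteristic computation. The only difference is cosmetic bookkeeping: the paper computes $\chi(S)$ from the cellular embedding of $D'_{\Gamma,C}$ (with $2|D'_{\Gamma,C}|$ vertices and $3|D'_{\Gamma,C}|$ edges), whereas you compute it directly from the embedding of $\Gamma$; since $V_\Gamma - E_\Gamma = -(n_4+2n_6) = -|D'_{\Gamma,C}| = V_{D'} - E_{D'}$, the two counts coincide and the cancellation goes through identically.
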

\begin{proof}
Consider the embedding $D'_{\Gamma, C}\to S$ described above.
The number of 2-cells on the white side of the embedding is the number of circles resulting in surgery of $D_W$. Likewise, the number of 2-cells on the black side of the embedding is the number of circles resulting in surgery of $D_B$. Applying the Circuit Nullity Theorem, the total number of 2-cells is $\mbox{corank}(M(D_W))+\mbox{corank}(M(D_B))+2$. Introducing the notation $|D'_{\Gamma, C}|$ to represent the number of chords in $D'_{\Gamma, C}$, the number of arcs in $D'_{\Gamma, C}$ is $2|D'_{\Gamma, C}|$, so its total number of edges is $3|D'_{\Gamma, C}|$. The number of vertices in $D'_{\Gamma, C}$ is $2|D'_{\Gamma, C}|$. Thus the Euler characteristic of $S$ is
\[ 2|D'_{\Gamma, C}| - 3|D'_{\Gamma, C}| + \mbox{corank}(M(D_W))+\mbox{corank}(M(D_B))+2\]
\[= -\mbox{rank}(M(D_W))-\mbox{rank}(M(D_B))+2 \]
so the nonorientable genus of $S$ is $\mbox{rank}(M(D_W))+\mbox{rank}(M(D_B))$.
\end{proof}

Thus $\Gamma$ has admits an atom of genus $g$ if and only if some permissible separation of $D_{\Gamma, C}$ results in $\mbox{rank}(D_W)+\mbox{rank}(D_B)=g$. This can be reduced to a problem on matrices, as follows.

\begin{definition}
A \emph{permissible partition} of the indices of $M(D'_{\Gamma, C})$ is a partition of the indices of $M(D'_{\Gamma, C})$ (which are just the chords of $D'_{\Gamma, C}$) into two parts, in such a way that chords arising from the same triad in $D_{\Gamma, C}$ are in the same part, and chords arising from the same double chord in $D_{\Gamma, C}$ are in different parts.
\end{definition}

Clearly, $D_W$ and $D_B$ are a permissible separation of $D'_{\Gamma, C}$ if and only if there exists a permissible partition $I \sqcup J$ of the indices of $M(D'_{\Gamma, C})$ such that $M(D_{\Gamma, C})_I$ is the intersection matrix of $D_W$ and $M(D_{\Gamma, C})_J$ is the intersection matrix of $D_B$.

\section{Main Result}

\begin{theorem}\label{main}
For a $\st$-graph $\Gamma$ which does not satisfy the source-target condition and which has rotating-splitting circuit $C$, $\Gamma$ has a checkerboard embedding into a nonorientable surface of genus $g$ if and only if there is a permissible partition of the indices of $M(D_{\Gamma, C})$ into parts $I$ and $J$ such that $\mbox{rank}(M_I)+\mbox{rank}(M_J)=g$.
\end{theorem}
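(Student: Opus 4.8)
The plan is to assemble Theorem \ref{main} from the machinery already established, treating it essentially as a bookkeeping reformulation of Lemma \ref{sep-to-genus}. The starting observation is the correspondence, established in the discussion preceding the theorem, between atoms of $\Gamma$ (equivalently, checkerboard embeddings of $\Gamma$) and permissible separations of $D'_{\Gamma, C}$. Thus I would begin the proof by fixing this correspondence: a checkerboard embedding of $\Gamma$ into a closed surface $S$ gives, via the smoothed rotating-splitting circuit, a permissible separation into $D_W$ and $D_B$, and conversely every permissible separation reconstructs an atom. The content to be proved is then purely that the genus condition on embeddings matches the rank condition on the partitioned intersection matrix.

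First I would invoke Lemma \ref{sep-to-genus} directly: for the permissible separation $(D_W, D_B)$ arising from a given embedding into a nonorientable surface $S$, the nonorientable genus of $S$ equals $\mbox{rank}(M(D_W)) + \mbox{rank}(M(D_B))$. This already expresses the genus in terms of the ranks of the two separated chord diagrams, so the remaining task is to translate ``separation'' into ``partition of matrix indices.'' For this I would cite the equivalence stated just before Section 3: the pair $(D_W, D_B)$ is a permissible separation of $D'_{\Gamma, C}$ if and only if there is a permissible partition $I \sqcup J$ of the indices of $M(D'_{\Gamma, C})$ with $M_I = M(D_W)$ and $M_J = M(D_B)$. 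The key point underlying this equivalence, which I would make explicit, is that the intersection matrix of a sub-chord-diagram is exactly the principal submatrix of $M(D'_{\Gamma, C})$ on the corresponding index set, since linkedness and sign of chords are intrinsic and unaffected by deleting other chords. Combining these two facts yields $\mbox{rank}(M_I) + \mbox{rank}(M_J) = g$ precisely when the embedding has nonorientable genus $g$.

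Both directions follow from the same chain of equivalences read forward and backward. For the forward direction, an embedding of genus $g$ produces a permissible separation, hence a permissible partition $I \sqcup J$, and Lemma \ref{sep-to-genus} forces $\mbox{rank}(M_I) + \mbox{rank}(M_J) = g$. For the reverse direction, a permissible partition with $\mbox{rank}(M_I) + \mbox{rank}(M_J) = g$ determines a permissible separation $(M_I, M_J) = (M(D_W), M(D_B))$, which reconstructs an atom; applying Lemma \ref{sep-to-genus} to that atom shows its surface has nonorientable genus $g$. I would note that the hypothesis that $\Gamma$ does not satisfy the source-target condition is what guarantees the reconstructed surface is genuinely nonorientable rather than orientable, so that Lemma \ref{sep-to-genus} (which computes nonorientable genus) applies; this is the one place where the hypothesis is used, and verifying it is the main subtlety rather than the main difficulty. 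The hard part of the whole argument is not in this theorem at all — it is concentrated in Lemma \ref{sep-to-genus} and in the correspondence between atoms and separations — so here I expect the proof to be short, and the chief risk is simply ensuring that the identification of principal submatrices with sub-chord-diagram intersection matrices is stated carefully enough to make the equivalence airtight.
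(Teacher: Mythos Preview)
Your proposal is correct and follows essentially the same route as the paper: invoke Lemma~\ref{sep-to-genus} to equate the genus with $\mbox{rank}(M(D_W))+\mbox{rank}(M(D_B))$, then use the already-established equivalence between permissible separations and permissible partitions to rewrite this as a condition on principal submatrices. Your treatment is in fact slightly more careful than the paper's, since you explicitly flag the role of the source-target hypothesis in ensuring the reconstructed surface is nonorientable, a point the paper's proof leaves implicit.
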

\begin{proof}
Let $D'_{\Gamma, C}$ be any expansion of $D_{\Gamma, C}$.
By Lemma \ref{sep-to-genus}, $\Gamma$ has a checkerboard embedding into a surface of genus $g$ if and only there is a permissible separation $(D_W,D_B)$ of $D'_{\Gamma, C}$ such that $\mbox{rank}(M(D_W)) + \mbox{rank}(M(D_B))=g$. Such a permissible separation exists if and only if there is a permissible partition of $M(D_{\Gamma, C})$ into parts $I$ and $J$ such that $\mbox{rank}(M_I)+\mbox{rank}(M_J)=g$.
\end{proof}

Thus, the problem of finding the minimal nonorientable genus into which a $\st$-chord diagram with each vertex of degree 4 or 6 may be checkerboard-embedded, is equivalent to the problem of finding a permissible partition of the indices of a matrix $M$ into parts $I$ and $J$ which minimizes $\mbox{rank}(M_I)+\mbox{rank}(M_J)$.

\section{The case of $\mathbb RP^2$}
A $\st$-graph $\Gamma$ with rotating-splitting circuit $C$ is embeddable into the projective plane if and only if there exists a permissible separation $(D_W,D_B)$ of $D'_{\Gamma,C}$ such that $\mbox{rank}(M(D_W))=1$ and $\mbox{rank}(M(D_B))=0$. In other words, $\Gamma$ is $\mathbb RP^2$-embeddable if and only if there exists a permissible separation of $D'_{\Gamma, C}$ into two chord diagrams, one of which consists of a family of pairwise-linked negative chords and a family of positive chords which are not linked to each other or to the negative chords, and the other of which consists of a family of pairwise-unlinked negative chords. We can test this condition by the following algorithm, which takes time quadratic in the number of chords of $D'_{\Gamma,C}$: First assign all negative chords to the same chord diagram. Then for each assigned chord, assign all positive chords linked to it to the other chord diagram. If an assigned chord originates from a triad, assign the other chord coming from this triad to the same chord diagram, and if the assigned chord originates from a double chord, assign the other chord coming from this double chord to the other chord diagram. Then, for each of the newly assigned chords, assign any unassigned linked chords or chords coming from the same triad or double chord, using the same rules described above. Repeat this process until for every assigned chord, the linked chords and any chord coming from the same triad or double chord have been assigned. If not all chords have been assigned, take any unassigned chord and arbitrarily assign it to $D_W$ or $D_B$, and repeat until all chords have been assigned. Finally, check whether this is a permissible separation, and whether $\mbox{rank}(M(D_W))+\mbox{rank}(M(D_B))=1$. $\Gamma$ is $\mathbb RP^2$-embeddable if and only if both of these conditions are true.

\section{The case of the Klein bottle}
A $\st$-graph $\Gamma$ with rotating-splitting circuit $C$ is embeddable into the Klein bottle if and only if there exists a permissible separation $(D_W,D_B)$ of $D'_{\Gamma,C}$ such that $\mbox{rank}(M(D_W))+\mbox{rank}(M(D_B))=2$. There are two possible cases in which this can occur: $\mbox{rank}(M(D_W))=\mbox{rank}(M(D_B))=1$ or $\mbox{rank}(M(D_W))=2$ and $\mbox{rank}(M(D_B))=0$.

\begin{figure}
\centering
\includegraphics[trim = 0cm 18cm 0cm 2cm, clip=true, totalheight=4cm]{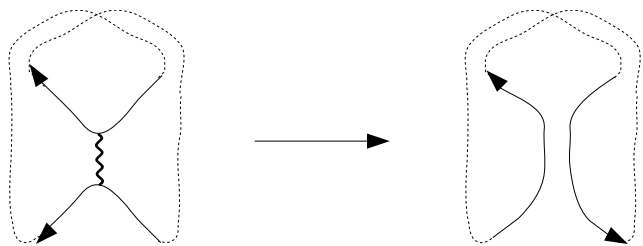}
\caption{Surgery at a negative chord, for determining embeddability into the Klein bottle}
\label{klein-surgery}
\end{figure}

We will first consider the case where $\mbox{rank}(M(D_W))=\mbox{rank}(M(D_B))=1$. In this case, we have a permissible separation of $D'_{\Gamma, C}$, each of which consists of a family of pairwise-linked negative chords and a family of positive chords which are not linked to each other or to the negative chords. This condition also admits a quadratic-time test, as follows: Assign one of the chords arbitrarily to $D_W$ or $D_B$. If the chord is positive, assign all chords linked to it to the other chord diagram; if it is negative, assign all positive linked chords and all negative unlinked chords to the other diagram. Regardless of sign, if an assigned chord originates from a triad, assign the other chord coming from this triad to the same chord diagram, and if the assigned chord originates from a double chord, assign the other chord coming from this double chord to the other chord diagram. Repeat this process until for every assigned chord, the linked chords and any chord coming from the same triad or double chord have been assigned. If not all chords have been assigned, take any unassigned chord and arbitrarily assign it to $D_W$ or $D_B$, and repeat until all chords have been assigned. Finally, check whether this is a permissible separation, and whether $\mbox{rank}(M(D_W))=\mbox{rank}(M(D_B))=1$. These conditions are met if and only if there is an embedding of $\Gamma$ into the Klein bottle so that a smoothing of $C$ divides the Klein bottle into two M\"obius bands.

If this test fails, there is still the possibility that $\Gamma$ has an embedding into the Klein bottle where the smoothing of $C$ bounds a disc.
To cover this possibility, we choose a negative chord $c$ of $D'_{\Gamma,C}$ and perform surgery at that chord, in the manner shown in \ref{klein-surgery}. Since this reverses the orientation of part of the designated cycle, we should also change the sign of all chords which cross $c$, producing a new chord diagram $D''_{\Gamma_C}$. Then for any surface $S$ and any embedding of $D'_{\Gamma,C} \to S$ which respects the signs of the chords, there is a corresponding embedding $D''_{\Gamma,C} \to S$, still respecting the signs of the chords. Furthermore, if the distinguished cycle in the embedding $D'_{\Gamma,C} \to S$ into the Klein bottle bounds a disc, then the distinguished cycle in the embedding $D''_{\Gamma,C} \to S$ bounds a M\"obius band. Thus $\Gamma$ has an embedding into the Klein bottle where the smoothing of $C$ bounds a disc if and only $D''_{\Gamma,C}$ has an embedding into the Klein bottle where the distinguished cycle bounds a M\"obius band. This condition can be checked using the algorithm in the previous paragraph.

\end{document}